\newtheorem{theorem}[subsection]{Theorem}
\newtheorem{lemma}[subsection]{Lemma}
\newtheorem{sublemma}[subsubsection]{Lemma}
\newtheorem{corollary}[subsection]{Corollary}
\newtheorem{definition}[subsection]{Definition}
\newtheorem{remark}[subsection]{Remark}
\newcommand\testshape{family=\f@family; series=\f@series; shape=\f@shape.}
\def\myemphInternal#1{\if n\f@shape%
\begingroup\itshape #1\endgroup\/%
\else\begingroup\bfseries #1\endgroup%
\fi}
\def\myemph{\futurelet\testchar\MaybeOptArgmyemph}
\def\MaybeOptArgmyemph{\ifx[\testchar \let\next\OptArgmyemph
                 \else \let\next\NoOptArgmyemph \fi \next}
\def\OptArgmyemph[#1]#2{\index{#1}\myemphInternal{#2}}
\def\NoOptArgmyemph#1{\myemphInternal{#1}}
\author{Sergiy Maksymenko, Eugene Polulyakh}
\title{Foliations with non-compact leaves on surfaces}
\address{Topology Department, Institute of Mathematics, Ukrainian National Academy of Science, Te\-re\-shchen\-kiv\-ska str. 3, 01601 Kyiv, Ukraine}
\email{maks@imath.kiev.ua, polulyah@imath.kiev.ua}
\subjclass[2010]{
30F15, 
57R30
}
\keywords{harmonic function, foliation, homotopy type}
\begin{document}

\begin{abstract}
We study non-compact surfaces obtained by gluing strips $\mathbb{R}\times(-1,1)$ with at most countably many boundary intervals along some these intervals.
Every such strip possesses a foliation by parallel lines, which gives a foliation on the resulting surface.
It is proved that the identity path component of the group of homeomorphisms of that foliation is contractible.
\end{abstract}

\maketitle

\newcommand\RRR{\mathbb{R}}
\newcommand\CCC{\mathbb{C}}
\newcommand\ZZZ{\mathbb{Z}}
\newcommand\NNN{\mathbb{N}}
\newcommand\FFF{\mathbb{F}}

\newcommand\EE{\mathcal{E}}
\newcommand\FF{\mathcal{F}}
\newcommand\LL{\mathcal{L}}
\newcommand\QQ{\mathcal{Q}}
\newcommand\XX{\mathcal{X}}
\newcommand\YY{\mathcal{Y}}

\newcommand\Fix{\mathrm{Fix}}
\newcommand\Int{\mathrm{Int}}
\newcommand\Fr{\mathrm{Fr}}
\newcommand\Per{\mathrm{Per}}
\newcommand\im{\mathrm{im\,}}
\newcommand\id{\mathrm{id}}
\newcommand\cl[1]{\overline{#1}}
\newcommand\spec{\mathrm{spec}}
\newcommand\diam{\mathrm{diam}}
\newcommand\defeq{:=}

\newcommand{\strip}{S}
\newcommand{\stripSurf}{Z}
\newcommand{\bdX}{X}
\newcommand{\bdY}{Y}
\newcommand{\func}{f}
\newcommand{\hstrip}{\widehat{S}}

\newcommand{\stripS}{S}
\newcommand{\stripT}{T}

\newcommand\PS{\mathcal{P}(\strip)}
\newcommand\PZ{\mathcal{P}(\stripSurf)}

\newcommand\Homeo{\mathcal{H}}
\newcommand\HRf{\Homeo(\RRR,\func)}
\newcommand\HZ{\Homeo(\stripSurf)}
\newcommand\HR{\Homeo(\RRR)}
\newcommand\dif{h}
\newcommand\gdif{g}
\newcommand\GG{\mathcal{G}}
\newcommand\CZ{C(\stripSurf)}     

\newcommand\Stab{\mathcal{S}}     
\newcommand\Stabf{\Stab(\func)}   
\newcommand\StabIdf{\Stab_0(\func)}   
\newcommand\Rf{\mathcal{R}(\func)}
\newcommand\RIdf{\mathcal{R}_0(\func)}

\newcommand\HF[1]{\mathcal{H}\bigl(\mathcal{F}_{#1}\bigr)}
\newcommand\HOF[1]{\mathcal{H}_{0}\bigl(\mathcal{F}_{#1}\bigr)}
\newcommand\HOFpr[1]{\mathcal{H}_{0}\bigl(\mathcal{F}_{#1}\bigr)'}

\newcommand\FCanon[1]{\mathcal{F}_{#1}}
\newcommand\FS{\FCanon{\strip}}
\newcommand\FZ{\FCanon{\stripSurf}}
\newcommand\FZpr{\FCanon{\stripSurf'}}

\newcommand\HFZ{\mathcal{H}\bigl(\mathcal{F}_{\stripSurf}\bigr)}
\newcommand\HOFZ{\mathcal{H}_{0}\bigl(\mathcal{F}_{\stripSurf}\bigr)}
\newcommand\HOFZpr{\mathcal{H}_{0}\bigl(\mathcal{F}_{\stripSurf}\bigr)'}

\newcommand\KRS{\Gamma_{\strip}}
\newcommand\KRZ{\Gamma_{\stripSurf}}

\newcommand\KRZT{\KRZ'}
\newcommand\Aut{\mathrm{Aut}}

\newcommand\xcoord{\lambda}
\newcommand\ycoord{\mu}

\newcommand\sat{\mathrm{Sat}}
\newcommand\qmap{q}

\newcommand\Aind{\mathbf{A}}
\newcommand\Bind{\mathbf{B}}
\newcommand\aind{{\alpha}}
\newcommand\bind{{\beta}}
\newcommand\cind{{\gamma}}

\newcommand\typeInternal{{\rm(a)}}
\newcommand\typeBd{{\rm(b)}}
\newcommand\typeGlued{{\rm(c)}}
\newcommand\typeCycle{{\rm(c1)}}
\newcommand\typeReduce{{\rm(c2)}}
\newcommand\typeSpec{{\rm(c3)}}

\section{Introduction}
The qualitative part of one complex variable function theory concerns with topological classification of analytical and pseudoharmonic functions as well as with foliations of their level-sets.
Such kind of problems was considered by S.~Stoilov~\cite{Stoilov:1964} and G.~T.~Whyburn~\cite{Weaver:AMSCP:1942} who introduced notions of internal and light open maps (respectively) which reflect certain essential topological features of analytical mappings.
At the same time foliations by level sets of harmonic function on the plane were studied by W.~Kaplan~\cite{Kaplan:DJM:1940}.

We will say that a continuous function $f:\stripSurf\to\RRR$ \myemph{agrees} with a $1$-dimensional foliation $\FF$ on $\stripSurf$ if 
\begin{itemize}
\item
each leaf of $\FF$ is a connected component of some level-set $f^{-1}(c)$, $c\in\RRR$;
\item
for each $z\in\RRR^2$ there are local coordinates $(u,v)$ in which $z=(0,0)$ and $f(u,v) = u + \mathrm{const}$.
\end{itemize}

Suppose $\FF$ is a one-dimensional foliation on $\RRR^2$ with all leaves non-compact.
W.~Kaplan~\cite{Kaplan:DJM:1940}, \cite{Kaplan:DJM:1941} extending old result by E.~Kamke~\cite{Kamke:MA:1928} proved that then there exists a continuous function $f:\RRR^2\to\RRR$ which agrees with $\FF$.
Moreover, one can find at most countable covering $\{\strip_{\aind}\}_{\aind\in\Aind}$ of $\RRR^2$ such that
\begin{itemize}
\item[(1)]
each $\strip_{\aind}$ consists of entire leaves of $\FF$;
\item[(2)]
the foliation on $\strip_{\aind}$ is equivalent to the foliation on the plane $\RRR^2$ or on the half-plane by parallel lines.
\end{itemize}
In other words, $\RRR^2$ is glued of countably many strips along open boundary intervals, see Figure~\ref{fig:strips_example}.
\begin{figure}[h]
\includegraphics[height=3cm]{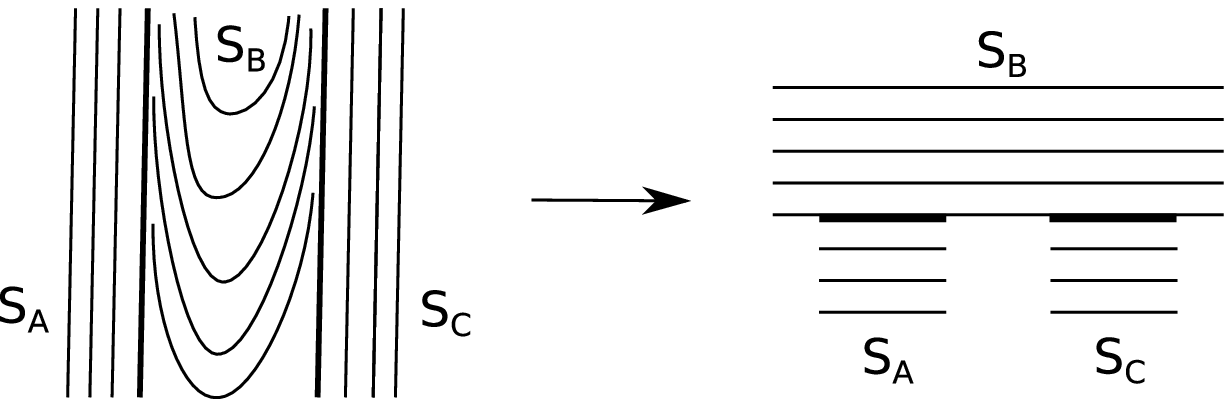}
\caption{}\label{fig:strips_example}
\end{figure}

In~\cite{Kaplan:TrAMS:1948} W.~Kaplan also shown that there exists a homeomorphism $\dif:\RRR^2\to\RRR^2$ such that the function $f\circ h$ is harmonic.
This result was further extended to foliations with singularities W.~Boothby~\cite{Boothby:AJM_1:1951}, \cite{Boothby:AJM_2:1951}, M.~Morse and J.~Jenkins~\cite{JenkinsMorse:AJM:1952}, M.~Morse~\cite{Morse:FM:1952}.
See also~\cite{JenkinsMorse:AJM:1952}, \cite{JenkinsMorse:APNASUSA:1953}, \cite{JenkinsMorse:ActaMath:1954}, \cite{JenkinsMorse:UMichPress:1955}, \cite{Morse:JMPA:1956}, \cite{SharkoSoroka:MFAT:2015}

In the last twenty years the interest to the topological classification of functions on surfaces arises due to a progress in the theory of Hamiltonial dynamical systems of small degrees of freedom, see e.g. A.~Fomenko and A.~Bolsinov~\cite{BolsinovFomenko:1997}, A.~Oshemkov~\cite{Oshemkov:PSIM:1995}, V.~Sharko ~\cite{Sharko:UMZ:2003}, \cite{Sharko:Zb:2006}, E.~Polulyakh and I.~Yurchuk~\cite{PolulyakhYurchuk:ProcIM:2009}, E.~Polulyakh~\cite{Polulyakh:UMZ:ENG:2015}.

\medskip

In the present paper we will study homotopical properties of foliations having properties (1) and (2) above on arbitrary open surfaces $\stripSurf$.
Thus such a surface is obtained from a family of strips $\{\strip_{\aind}\}_{\aind\in\Aind}$ glued along some boundary intervals, and therefore such surfaces will be called \myemph{stripped}.
Every strip admits a natural foliation by parallel lines which gives a foliation $\FZ$ on $\stripSurf$ with all leaves non-compact.
We will call this foliation \myemph{canonical}.
Let $\HFZ$ be the group of all homeomorphisms of $\stripSurf$ which maps leaves of $\FZ$ onto leaves of $\FZ$, and $\HOFZ$ be the identity path component of $\HFZ$ with respect to the open compact topology.
We will prove (Theorem~\ref{th:characterzation_of_H0}) that $\HOFZ$ is contractible.
Hence the homotopy type of $\HFZ$ reduces to the computation of the \myemph{homeotopy group} (or \myemph{mapping class group}) $\pi_0\HFZ= \HFZ/\HOFZ$ of the foliation $\FZ$.
As an example we characterize stripped surfaces consisting of one strip (Theorem~\ref{th:reducing_stripped_surfaces}).

\section{Stripped surfaces}\label{sect:stripped_surfaces}

\begin{definition}\label{def:model_strip}
A subset $\strip \subset \RRR\times[-1,1]$ will be called a \myemph{model strip} if 
\begin{enumerate}
\item[\rm(1)]
$\RRR\times (-1,1) \ \subset \ \strip$,
\item[\rm(2)]
the intersection $\strip \cap (\RRR\times \{-1,1\})$ is a (possibly empty) union of open finite intervals with mutually disjoint closures.
\end{enumerate}
\end{definition}

For example, $\RRR\times(-1,1)$ is a model strip, while $\RRR\times[-1,1]$, $\RRR\times(-1,1]$, $\RRR\times[-1,1)$ are not.

For a model strip $\strip$ we will use the following notation:
\begin{align*}
\partial_{-}\strip &:= \strip \ \cap \ \RRR\times \{-1\}, &
\partial_{+}\strip &:= \strip \ \cap \ \RRR\times \{1\}, &
\partial\strip &:= \partial_{-}\strip \ \cup \ \partial_{+}\strip.
\end{align*}

Connected components of $\partial_{-}\strip$ (resp. $\partial_{+}\strip$) will be called \myemph{lower} (resp. \myemph{upper}) boundary intervals.

\begin{definition}\label{def:stripped_surface}
A \myemph{stripped surface} is the quotient space
\begin{equation}\label{equ:stripped_surface}
\stripSurf = \bigsqcup\limits_{\aind\in\Aind} \strip_{\aind} \left/ \{ \bdY_{\bind} \stackrel{\phi_{\bind}}{\sim} \bdX_{\bind} \}_{\bind\in \Bind} \right.,
\end{equation}
where 
\begin{itemize}
\item[\rm(a)]  $\bigsqcup\limits_{\aind\in\Aind} \strip_{\aind}$ is a disjoint union of model strips;

\medskip

\item[\rm(b)] 
$\{\bdX_{\bind}, \bdY_{\bind}\}_{\bind\in \Bind} \subset \bigcup\limits_{\aind\in\Aind} \partial\strip_{\aind}$
is a family of pairs of boundary intervals such that $\bdX_{\bind} \not= \bdY_{\bind}$, $\bdY_{\bind} \not = \bdY_{\bind'}$ and $\bdX_{\bind} \not = \bdX_{\bind'}$ for $\bind\not=\bind' \in \Bind$;

\medskip

\item[\rm(c)]
$\phi_{\bind}: \bdY_{\bind} \to  \bdX_{\bind}$, $\bind\in \Bind$, is an {\bfseries affine} homeomorphism preserving or reversing orientations.
\end{itemize}
\end{definition}

Thus a stripped surface is a surface obtained from a family model strips by identifying some pairs of boundary intervals via affine homeomorphisms.
It is allowed that two strips are glued along more than one pair of boundary components.
One may also glue together intervals belonging to the boundary of same strip $\strip$, and even to the same lower or upped part of $\partial\strip$.

\begin{remark}\rm
Notice that if $a<b$ and $c<d$, then there are exactly two \textit{affine} homeomorphisms $\phi^{+}, \phi^{-}: (a,b) \to (c,d)$ such that $\phi^{+}$ preserves orientation and $\phi^{-}$ reverses it.
Namely,
\begin{equation}\label{equ:affine_gluing}
\phi^{+}(t) = \frac{d-c}{b-a}\bigl(t - a \bigr) + c,
\qquad \qquad 
\phi^{-}(t) = \frac{c-d}{b-a}\bigl(t - a \bigr) + d,
\end{equation}
for $t\in(a,b)$.
\end{remark}

\begin{remark}\rm
The assumption that the gluing maps are affine is technical and not crucial, however it will be essentially used in the proof of Lemma~\ref{lm:HOFZpr_is_contractible}.
\end{remark}

Let $\stripSurf$ be a stripped surface, defined by~\eqref{equ:stripped_surface}, and 
\[ \qmap: \bigsqcup\limits_{\aind\in\Aind} \strip_{\aind}  \longrightarrow \stripSurf \]
be the quotient map.
Then $\stripSurf$ is a non-compact two-dimensional manifold which can be nonconnected, non-orientable, and have boundary.
Every connected component of $\partial\stripSurf$ is an interval.
Also notice that a subset $U\subset\stripSurf$ is open if and only if $\qmap^{-1}(U)$ is open in $\bigsqcup\limits_{\aind\in\Aind}\strip_{\aind}$.

For each $\aind\in \Aind$ let 
\[ \xi_{\aind} :\strip_{\aind} \hookrightarrow \bigsqcup\limits _{\aind\in\Aind} \strip_{\aind} \xrightarrow{~\qmap~} \stripSurf\] be the composition of the inclusion of $\strip_{\aind}$ into $\bigsqcup_{\aind\in\Aind} \strip_{\aind}$ with the quotient map $\qmap$.
We will call $\xi_{\aind}$ a \myemph{chart} map corresponding to $\strip_{\aind}$.

It will also be convenient to use the following notation:
\begin{align*}
\hstrip_{\aind} &= \xi_{\aind}(\strip_{\aind}), &
\partial_{-}\hstrip_{\aind} &= \xi_{\aind}(\partial_{-}\strip_{\aind}), &
\partial_{+}\hstrip_{\aind} &= \xi_{\aind}(\partial_{+}\strip_{\aind}), &
\partial\hstrip_{\aind} &= \xi_{\aind}(\partial\strip_{\aind}).
\end{align*}
In particular, the image $\hstrip_{\aind}=\xi_{\aind}(\strip_{\aind})$ will be called a \myemph{strip} of $\stripSurf$.
Notice that if $\xi_{\aind}$ is not an embedding, then $\partial_{-}\hstrip_{\aind}$ and $\partial_{+}\hstrip_{\aind}$ may intersect.

On the other hand, the assumptions (b) and (c) guarantee that both restrictions $\xi_{\aind}|_{\partial_{-}\strip_{\aind}}$ and $\xi_{\aind}|_{\partial_{+}\strip_{\aind}}$ are injective.

\medskip

\section{Canonical foliation on a stripped surface}\label{sect:canon_foliation}
Notice that each model strip $\strip$ admits a one-dimensional foliation $\FS$ whose leaves are connected components of $\partial\strip$ and sets $\RRR\times\{t\}$, $t\in(-1,1)$.

More generally, let $\stripSurf$ be a stripped surface.
Since its strips are glued by homeomorphisms of leaves, the foliations on strips of $\stripSurf$ yield a foliation $\FZ$ on $\stripSurf$.
We will call this foliation \myemph{canonical}.

Let $\Gamma(\FF) = \stripSurf / \FF$ be the space of leaves endowed with the corresponding quotient topology, and $p:\stripSurf \to \Gamma(\FF)$ be the quotient map.
Then by definition a subset $V \subset \Gamma(\stripSurf)$ is open if and only if its inverse image $p^{-1}(V)$ is open in $\stripSurf$.
Thus open subsets of $\Gamma(\FF)$ can be regarded as open saturated subsets of $\stripSurf$.

\begin{lemma}
$\Gamma(\FF)$ is a $T_1$-space.
\end{lemma}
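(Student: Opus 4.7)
The plan is to verify the $T_1$ axiom by showing that every singleton of $\Gamma(\FF)$ is closed. Since $p\colon\stripSurf\to\Gamma(\FF)$ is a quotient map, the singleton $\{p(\ell)\}$ is closed in $\Gamma(\FF)$ if and only if the leaf $\ell$ is closed in $\stripSurf$; and by the characterisation of open subsets of $\stripSurf$ via the quotient map $\qmap$, this is in turn equivalent to the condition that for every $\aind\in\Aind$ the intersection $\qmap^{-1}(\ell)\cap\strip_{\aind}$ be closed in $\strip_{\aind}$. Thus the whole question reduces to a statement about a single model strip at a time.

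Next I would describe the intersection $\qmap^{-1}(\ell)\cap\strip_{\aind}$ explicitly. Three cases occur: either it is empty; or it is a single ``interior'' leaf $\RRR\times\{t\}$ of the model-strip foliation $\FCanon{\strip_{\aind}}$ with $t\in(-1,1)$, when $\ell$ meets $\hstrip_{\aind}$ away from the boundary; or $\ell$ is a boundary leaf and the intersection is the (possibly infinite) union of those boundary intervals of $\strip_{\aind}$ which become identified with $\ell$ through chains of the gluings $\phi_{\bind}$. The first two cases are immediate, since $\RRR\times\{t\}$ is plainly closed in $\strip_{\aind}\subset\RRR\times[-1,1]$.

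The real work, and the step I expect to be the main obstacle, is the third case: any union of boundary intervals of a fixed model strip must be shown to be closed in that strip. Here I would use condition~(2) of Definition~\ref{def:model_strip} essentially. A hypothetical limit point of such a union that does not itself belong to the union must lie on $\RRR\times\{-1,1\}$ (since the open part $\RRR\times(-1,1)$ is disjoint from all the boundary intervals), and such a point must then be either an endpoint of some boundary interval or a point lying in no boundary interval at all. In either subcase the ``mutually disjoint closures'' hypothesis guarantees that the point is excluded from $\strip_{\aind}$. The delicate aspect is to argue that even if infinitely many boundary intervals accumulate at a single location, no extra points of $\strip_{\aind}$ sneak into the closure; the disjointness of the closures inside $\RRR\times[-1,1]$ is exactly what rules this out.

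Combining the three cases gives that $\qmap^{-1}(\ell)$ is closed in the disjoint union $\bigsqcup_{\aind\in\Aind}\strip_{\aind}$, so $\ell$ is closed in $\stripSurf$ and consequently $\{p(\ell)\}$ is closed in $\Gamma(\FF)$, as required.
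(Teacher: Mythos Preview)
Your proposal is correct and follows the same route as the paper: reduce the $T_1$ property to closedness of each leaf in $\stripSurf$, then verify this strip by strip via the quotient map $\qmap$, treating interior leaves and boundary leaves separately and invoking condition~(2) of Definition~\ref{def:model_strip} for the latter.

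One remark on your third case. In the gluing of Definition~\ref{def:stripped_surface} each boundary interval is identified with at most one other (this is implicit in the requirement that $\stripSurf$ be a $2$-manifold, and is reflected in the paper's observation that $\xi_{\aind}|_{\partial_{-}\strip_{\aind}}$ and $\xi_{\aind}|_{\partial_{+}\strip_{\aind}}$ are injective). Consequently $\qmap^{-1}(\ell)\cap\strip_{\aind}$ consists of at most \emph{two} boundary intervals, never an infinite union, and the ``main obstacle'' you anticipate does not arise: a finite union of closed sets is closed. Your more general argument covering infinite unions is not wrong, just unnecessary here.
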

\begin{proof}
We should prove that every one-point set $\{x\} \subset \Gamma(\FF)$ is closed in $\Gamma(\FF)$, i.e. that each leaf $\omega$ of $\FF$ is closed in $\stripSurf$.
Since $\stripSurf = \bigsqcup_{\aind\in\Aind} \strip_{\aind} \left/ \{ \bdY_{\bind} \stackrel{\phi_{\bind}}{\sim} \bdX_{\bind} \}_{\bind\in \Bind} \right.$, see~\eqref{equ:stripped_surface}, we should check that every leaf in a model strip is closed.
But the latter is evident for leaves belonging to interiors of strips $\Int{\strip_{\aind}} = \RRR\times(-1,1)$ and follows from (2) of Definition~\ref{def:model_strip} for leaves belonging to $\partial\strip_{\aind} = \strip_{\aind} \setminus \Int{\strip_{\aind}}$.
\end{proof}

A homeomorphism $\dif:\stripSurf\to\stripSurf'$ between stripped surfaces will be called an \myemph{$\FF$-homeo\-morphism} whenever it maps leaves of $\FZ$ onto leaves of $\FZpr$.

Evidently, for each leaf $\omega$ we have exactly one of the following possibilities.

\smallskip

\begin{enumerate}[leftmargin=*]
\item[\typeInternal]
$\omega$ belongs to $\hstrip_{\cind} \setminus \partial\hstrip_{\cind}$ for some $\cind\in \Aind$; in this case $\omega$ will be called \myemph{internal}.
\smallskip

\item[\typeBd]
$\omega \subset \partial\hstrip_{\cind} \subset \partial\stripSurf$ for some $\cind\in \Aind$; in this case $\omega$ will be called a \myemph{boundary} leaf.
\smallskip

\item[\typeGlued]
$\omega \subset \partial_{\varepsilon}\hstrip_{\cind} \cap \partial_{\varepsilon'}\hstrip_{\cind'}$ for some $\cind,\cind'\in \Aind$ and $\varepsilon,\varepsilon'\in\{\pm\}$.
Then $\omega = \xi_{\cind}(\bdX_{\bind}) = \xi_{\cind'}(\bdY_{\bind})$ for some $\bind\in \Bind$.
This situation splits into the following three cases:
\smallskip

\begin{itemize}[leftmargin=1.5cm]
\item[\typeCycle]
$\cind = \cind'$, $\bdX=\partial_{\varepsilon}\strip_{\cind}$, and $\bdY=\partial_{\varepsilon'}\strip_{\cind}$;
so $\omega = \partial_{-}\hstrip_{\cind} = \partial_{+}\hstrip_{\cind} = \partial\hstrip_{\cind}$;

\smallskip

\item[\typeReduce]
$\cind \not= \cind'$, $\bdX=\partial_{\varepsilon}\strip_{\cind}$, and $\bdY=\partial_{\varepsilon'}\strip_{\cind}$, so
$\omega = \partial_{\varepsilon}\hstrip_{\cind} = \partial_{\varepsilon'}\hstrip_{\cind'}$;

\item[\typeSpec]
$\bdX\not=\partial_{\varepsilon}\strip_{\cind}$ or $\bdY\not=\partial_{\varepsilon'}\strip_{\cind}$,
in this case $\omega$ will be called \myemph{special}.

\smallskip

\end{itemize}
\end{enumerate}

Our aim is to reduce the situation to the case when there is no leaves of types {\typeCycle} and {\typeReduce}.
For this we need the following lemma.

\begin{lemma}\label{lm:reducing_strips}
Let $\stripS = \RRR\times(-1,1)$ and $\stripT = \stripS \setminus \bigl(  (-\infty, -1] \cup [1,+\infty) \bigr)\times0$.
Then there exists a homeomorphism $\dif:\stripS \to \stripT$ fixed outside $\RRR\times(-\tfrac{1}{2}, \tfrac{1}{2})$ and preserving horizontal lines, that is $\dif(x,y) = (\alpha(x,y), y)$ for some continuous function $\alpha$, see Figure~{\rm\ref{fig:reducing_c3}}.
\end{lemma}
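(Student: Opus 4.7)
The plan is to construct $\dif$ explicitly slice-by-slice. Write $\dif(x,y) = (\alpha(x,y), y)$ and choose $\alpha$ so that each horizontal slice map $\alpha(\cdot, y)$ is a monotone homeomorphism onto the horizontal slice of $\stripT$ at height $y$ (which is $\RRR$ for $y \neq 0$ and $(-1,1)$ for $y = 0$), and so that $\alpha(x,y) = x$ whenever $|y|\geq\tfrac12$. The image of the resulting $\dif$ will then be exactly $\stripT$.

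Concretely, fix a homeomorphism $\tau : \RRR \to (-1,1)$ (for instance $\tau(x) = x/(1+|x|)$, which is $C^{1}$ with everywhere positive derivative) and a continuous ``bump'' $\phi : \RRR \to [0,1]$ with $\phi(0) = 1$, $\phi(y) < 1$ for $y \neq 0$, and $\phi(y) = 0$ for $|y| \geq \tfrac12$. Define
\[
\alpha(x,y) \;\defeq\; \bigl(1 - \phi(y)\bigr)\, x \;+\; \phi(y)\, \tau(x).
\]
Then $\alpha$ is continuous, reduces to $x$ for $|y|\geq\tfrac12$, and $\dif(x,y)=(\alpha(x,y),y)$ automatically preserves horizontal lines.

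The key verification is that each slice $\alpha(\cdot,y)$ is a homeomorphism onto the correct target. For $y$ with $\phi(y) < 1$, i.e.\ $y \neq 0$, the slice map is strictly increasing (its derivative in $x$ equals $(1-\phi(y)) + \phi(y)\,\tau'(x) > 0$) and proper (the term $(1-\phi(y))\,x$ tends to $\pm\infty$ while $\phi(y)\,\tau(x)$ stays bounded in $[-1,1]$), so it is a homeomorphism $\RRR \to \RRR$. For $y=0$ we have $\alpha(x,0) = \tau(x)$, a homeomorphism $\RRR \to (-1,1)$. Taking the union of slice images over $y \in (-1,1)$ gives precisely $\stripT$, so $\dif : \stripS \to \stripT$ is a continuous bijection.

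To upgrade this continuous bijection to a homeomorphism I invoke Brouwer's invariance of domain: $\dif$ is a continuous injection from the open subset $\stripS \subset \RRR^{2}$ into $\RRR^{2}$, hence is an open map, so $\dif^{-1}$ is continuous on $\stripT = \dif(\stripS)$. The main subtlety one would otherwise face is continuity of $\dif^{-1}$ at points $(u_{0},0) \in \stripT$ with $u_{0} \in (-1,1)$, since as $y \to 0$ the slice image compresses from $\RRR$ down to $(-1,1)$ and one has to rule out escape of preimages to $\pm\infty$; this is exactly the point that invariance of domain lets us sidestep, and I expect it to be the only real obstacle in a more pedestrian argument.
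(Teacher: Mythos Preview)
Your proof is correct and takes a somewhat different route from the paper's. The paper also works slice-by-slice but, instead of your convex combination, it fixes a squeeze $\sigma:\RRR\to(-1,1)$, applies $\sigma$ on the growing central interval $[-1/|y|,\,1/|y|]$, and extends linearly with matching slope outside that interval; the verification that this is a homeomorphism with the stated properties is then left to the reader. Your construction has two advantages. First, the bump $\phi$ supported in $(-\tfrac12,\tfrac12)$ makes the ``fixed outside $\RRR\times(-\tfrac12,\tfrac12)$'' condition immediate, whereas the paper's formula does not transparently satisfy it (indeed, as written, for $|y|$ close to $1$ the map still applies $\sigma$ on a nontrivial interval). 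Second, your appeal to invariance of domain cleanly disposes of the only delicate point---continuity of $h^{-1}$ along $y=0$---which the paper does not address explicitly. On the other hand, the paper's approach is in principle entirely elementary and avoids invoking invariance of domain; with a correct choice of $\sigma$ (the stated $\sigma(t)=t/(t^2+1)$ is not actually injective, so something like $t/\sqrt{1+t^2}$ or your $t/(1+|t|)$ is intended) one could verify continuity of the inverse by hand, at the cost of more bookkeeping.
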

\begin{proof}
Let $\sigma:\RRR\to(-1,1)$ be a $C^{\infty}$-diffeomorphism given by $\sigma(t) = \frac{t}{t^2+1}$.
\begin{figure}[h]
\includegraphics[height=1.3cm]{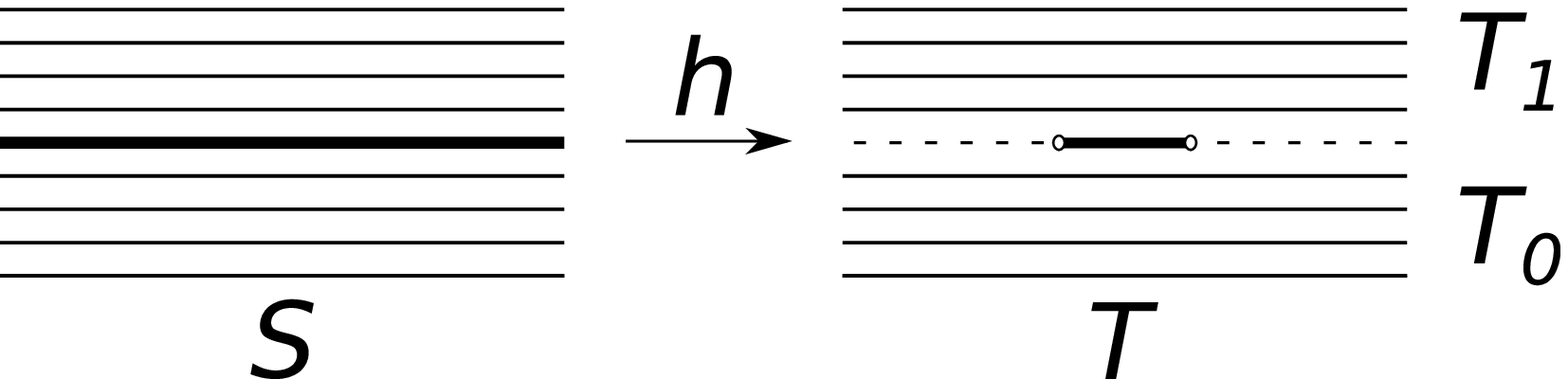}
\caption{}\label{fig:reducing_c3}
\end{figure}
Evidently, $\sigma'(t) = \frac{1-t^2}{(1+t^2)^2}$.
Define $\dif:\stripS \to \stripT$ by the formula:
\[
\dif(x,y) = 
\begin{cases}
\bigl(\sigma(x),0\bigr), & y=0, \\
\Bigl(\sigma'(-\tfrac{1}{|y|})\cdot (x + \tfrac{1}{|y|}) +  \sigma(-\tfrac{1}{|y|})\, , \ y \Bigr), & y\not=0, \ x \in (-\infty, -\tfrac{1}{|y|}], \\ 
\Bigl(\sigma(x),\, y\Bigr), & y\not=0, \ x \in [-\frac{1}{|y|}, \frac{1}{|y|}], \\ \medskip
\Bigl(\sigma'(\tfrac{1}{|y|})\cdot (x - \tfrac{1}{|y|}) +  \sigma(\tfrac{1}{|y|})\, , \ y \Bigr), & y\not=0, \ x \in (\tfrac{1}{|y|}, +\infty).
\end{cases}
\]
The verification that $\dif$ is indeed a homeomorphism having the corresponding properties we leave for the reader.
\end{proof}
\begin{corollary}\label{cor:reducing_infinite_strips}
Let $\stripS = \RRR^2$ and $\stripT = \RRR^2\setminus \bigl(  (-\infty, -1] \cup [1,+\infty) \bigr)\times \{2n\}_{n\in\ZZZ}$.
Then there exists a homeomorphism $\dif:\stripS \to \stripT$ fixed outside $\RRR\times \cup_{n\in\ZZZ} (2n-\tfrac{1}{2}, 2n+\tfrac{1}{2})$ and preserving horizontal lines, that is $\dif(x,y) = (\alpha(x,y), y)$ for some continuous function $\alpha$, see Figure~{\rm\ref{fig:reducing_c3_infinite}}.
\end{corollary}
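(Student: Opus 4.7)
The plan is to apply Lemma~\ref{lm:reducing_strips} separately inside each horizontal slab $U_n := \RRR\times(2n-1,2n+1)$, $n\in\ZZZ$, vertically shifted so that its distinguished horizontal line becomes $y=2n$, and to patch the resulting local homeomorphisms together by the identity elsewhere.

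Concretely, let $\tau_n(x,y) := (x,y-2n)$ be the vertical translation and let $\dif_0 : \RRR\times(-1,1) \to \RRR\times(-1,1)\setminus((-\infty,-1]\cup[1,+\infty))\times 0$ denote the homeomorphism produced by Lemma~\ref{lm:reducing_strips}. The conjugate $\dif_n := \tau_n^{-1}\circ\dif_0\circ\tau_n$ is then a horizontal-line-preserving homeomorphism from $U_n$ onto $U_n\setminus((-\infty,-1]\cup[1,+\infty))\times\{2n\}$, equal to the identity outside the thinner slab $V_n := \RRR\times(2n-\tfrac12,2n+\tfrac12)$. Define
\[
\dif(x,y) :=
\begin{cases}
\dif_n(x,y), & (x,y)\in V_n \text{ for some } n\in\ZZZ, \\
(x,y), & \text{otherwise}.
\end{cases}
\]
Because the slabs $\{V_n\}_{n\in\ZZZ}$ are pairwise disjoint and each $\dif_n$ already agrees with the identity near the horizontal boundary of $V_n$, this formula consistently defines a bijection $\dif:\stripS\to\stripT$ which preserves horizontal lines and is supported in $\bigcup_{n\in\ZZZ} V_n$. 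The same recipe applied to the inverses $\dif_n^{-1}$ furnishes the inverse map.

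The only verification of substance is continuity of $\dif$ and $\dif^{-1}$ at points lying on the horizontal lines $y=2n\pm\tfrac12$, but this is immediate: the family $\{V_n\}$ is locally finite in $\RRR^2$, and in an open neighbourhood of each such line $\dif$ coincides with the identity. I do not foresee any real obstacle here; the corollary is essentially a bookkeeping extension of Lemma~\ref{lm:reducing_strips} to a countable disjoint family of slabs.
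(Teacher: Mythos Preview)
Your argument is correct and is exactly the intended one; the paper states the corollary without proof, relying on the obvious translate-and-patch of Lemma~\ref{lm:reducing_strips} around each line $y=2n$. One tiny quibble: Lemma~\ref{lm:reducing_strips} only guarantees $\dif_0=\id$ for $|y|\ge\tfrac12$, not in a full open neighbourhood of $|y|=\tfrac12$, so your continuity justification at the lines $y=2n\pm\tfrac12$ should instead appeal to the continuity of $\dif_n$ on the larger slab $U_n\supset\overline{V_n}$ together with $\dif_n|_{U_n\setminus V_n}=\id$ (the pasting lemma on the locally finite closed cover $\{\overline{V_n}\}\cup\{\RRR^2\setminus\bigcup_m V_m\}$ then does the job).
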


\begin{figure}[!htbp]
\includegraphics[height=1.5cm]{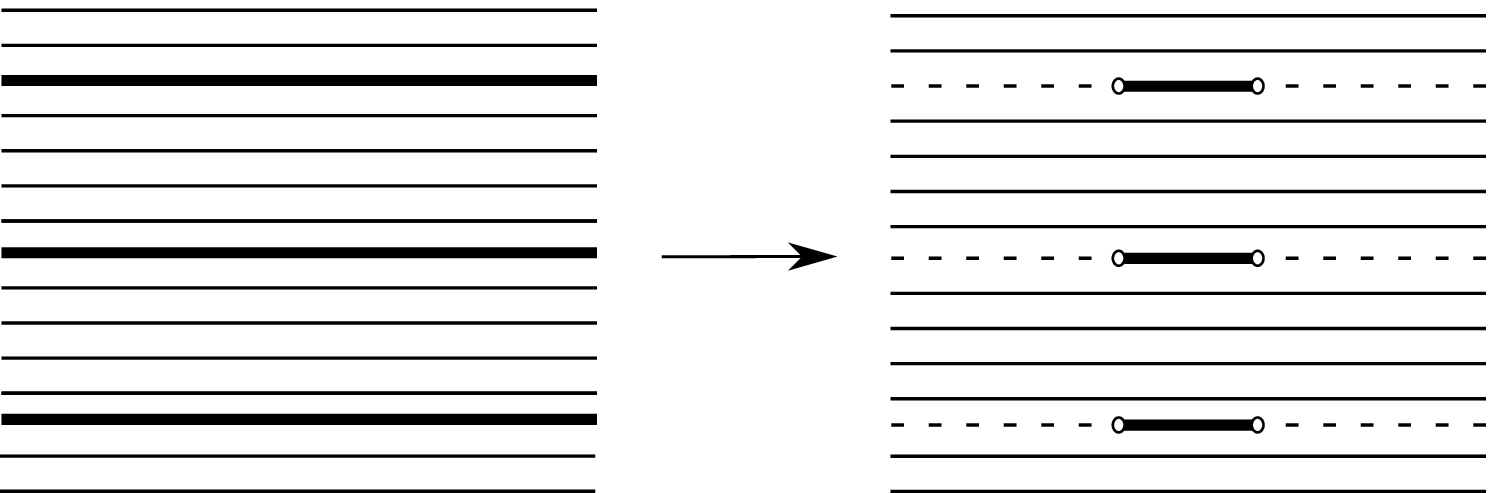}
\caption{}\label{fig:reducing_c3_infinite}
\end{figure}

\begin{remark}\label{rem:eliminatinc_c3_leaf}\rm
Notice that $\stripT$ from Lemma~\ref{lm:reducing_strips} is a stripped surface glued of two strips $\stripT_0$ and $\stripT_1$ so that $\omega=(-1,1)\times 0$ is a leaf of type {\typeReduce}, see Figure~\ref{fig:reducing_c3}.
Then Lemma~\ref{lm:reducing_strips} claims that these two strips can be replaced with a unique one so that the corresponding leaf of type {\typeReduce} becomes internal, i.e. having type {\typeInternal}.

Moreover, Corollary~\ref{cor:reducing_infinite_strips} shows that even infinite sequence of strips glued in a way shown in Figure~\ref{fig:reducing_c3_infinite} can also be replaces with one open strip. 
\end{remark}

Let $\stripS$ be a model strip such that $\partial_{+}\stripS =(-2,2)$ and $\partial_{-}\stripS =(-2,2)$, see Figure~\ref{fig:strip_rectangle}.
\begin{figure}[h]
\includegraphics[height=0.9cm]{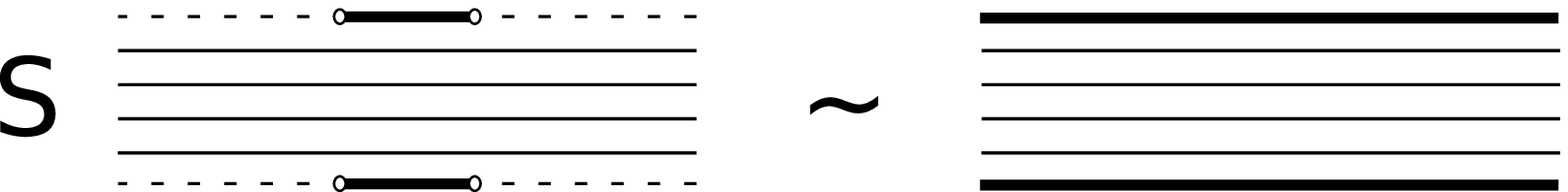}
\caption{}\label{fig:strip_rectangle}
\end{figure}

Define the following two homeomorphisms $\phi_{c}, \phi_{m}: \partial_{+}\stripS \to \partial_{-}\stripS$ by
\begin{align*}
\phi_c(t)&=t, &
\phi_m(t)&=-t,
\end{align*}
for all $t\in(-2,2)$.
Let also $C = \stripS/ \phi_{c}$, $M = \stripS / \phi_{m}$ be the quotients of $\stripS$ obtained by identifying its boundary components via $\phi_c$ and $\phi_m$ respectively.
It follows from Lemma~\ref{lm:reducing_strips} that $C$ is an open cylinder and $M$ is an open M\"obius band.
Moreover, the leaf obtained by gluing $\partial_{+}\stripS$ with $\partial_{-}\stripS$ is of type {\typeCycle}. 

\begin{lemma}\label{lm:reducing_c2}
If $\stripSurf$ has a leaf of type {\typeCycle}, then it is $\FF$-homeomorphic either with $C$ or with $M$.
\end{lemma}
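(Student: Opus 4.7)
The plan is to show that the existence of a leaf $\omega$ of type {\typeCycle} forces $\stripSurf$ to consist of a single strip $\strip_{\cind}$ whose two boundary intervals are glued to each other, and then to normalise this configuration to the standard model $C$ or $M$ by an explicit $\FF$-homeomorphism. First note that by the definition of type {\typeCycle} there is an index $\cind\in\Aind$ for which both $\partial_{-}\strip_{\cind}$ and $\partial_{+}\strip_{\cind}$ consist of a single open finite interval, say $(a,b)\times\{-1\}$ and $(c,d)\times\{1\}$ respectively, and the identification is a single affine homeomorphism $\phi:(a,b)\to(c,d)$. Condition~(b) of Definition~\ref{def:stripped_surface} then prevents either of these intervals from being used in another gluing pair, so no other strip of $\stripSurf$ is attached to $\hstrip_{\cind}$; hence $\hstrip_{\cind}$ is clopen in $\stripSurf$, and since the lemma's conclusion is that $\stripSurf$ is connected (being $C$ or $M$), I may take $\stripSurf=\hstrip_{\cind}$.

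The second step is to construct an $\FF$-homeomorphism $\tau:\strip_{\cind}\to\stripS$ of model strips, where $\stripS$ is the fixed model preceding the lemma with $\partial_{\pm}\stripS=(-2,2)\times\{\pm 1\}$. Let $\tilde A,\tilde B:\RRR\to\RRR$ be the affine extensions of the affine bijections $(a,b)\to(-2,2)$ and $(c,d)\to(-2,2)$, and set
\[
\tau(x,y)=(\alpha(x,y),y), \qquad \alpha(x,y)=\tfrac{1-y}{2}\,\tilde A(x)+\tfrac{1+y}{2}\,\tilde B(x).
\]
For every $y\in[-1,1]$ the map $x\mapsto\alpha(x,y)$ is affine in $x$ with strictly positive leading coefficient, so it is a self-homeomorphism of $\RRR$; it equals $\tilde A$ at $y=-1$ and $\tilde B$ at $y=1$. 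Consequently $\tau$ preserves every horizontal line, restricts on the boundary intervals to the required affine bijections onto $(-2,2)$, and sends $\RRR\times(-1,1)$ homeomorphically onto itself. A routine check then shows $\tau$ is a foliation-preserving homeomorphism of model strips.

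The third step is to transport the gluing $\phi$ across $\tau$. The identification induced on $\stripS$ is the composition $\psi=\tilde B\circ\phi\circ\tilde A^{-1}:(-2,2)\to(-2,2)$, which is itself affine and surjective. But the only affine self-homeomorphisms of $(-2,2)$ are $t\mapsto t$ and $t\mapsto -t$, so $\psi$ coincides with $\phi_c$ when $\phi$ preserves orientation and with $\phi_m$ when $\phi$ reverses it. Therefore $\tau$ descends to the desired $\FF$-homeomorphism $\stripSurf\to\stripS/\psi$, realising $\stripSurf$ as $C$ or $M$ respectively. The only delicate point --- and the reason a $y$-independent reparametrisation of $x$ cannot work --- is that the normalisations of the upper and lower boundary intervals are forced to be distinct affine maps $\tilde A$ and $\tilde B$, so one must interpolate between them in a way that remains a self-homeomorphism of each horizontal line; the convex combination above does exactly this.
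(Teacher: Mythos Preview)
Your argument is correct and follows the same line as the paper's proof: establish that $\hstrip_{\cind}$ is open and closed in $\stripSurf$, reduce to a single strip with its two boundary intervals glued, and then identify the quotient with $C$ or $M$ according to the orientation type of the gluing. The paper simply asserts this last identification, whereas you supply the explicit $\FF$-homeomorphism via the affine interpolation $\alpha(x,y)=\tfrac{1-y}{2}\tilde A(x)+\tfrac{1+y}{2}\tilde B(x)$; this is a nice elaboration of a step the paper leaves to the reader, and your observation that a $y$-independent reparametrisation cannot work is exactly the point.

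Two small remarks. First, your sentence ``since the lemma's conclusion is that $\stripSurf$ is connected \dots\ I may take $\stripSurf=\hstrip_{\cind}$'' reads circularly; what you mean (and what the paper also uses without stating) is that connectedness of $\stripSurf$ is an implicit hypothesis, so the clopen set $\hstrip_{\cind}$ must be all of $\stripSurf$. Second, you should say explicitly that $\tilde A,\tilde B$ are the \emph{increasing} affine bijections onto $(-2,2)$; you use this when you claim the leading coefficient of $x\mapsto\alpha(x,y)$ is strictly positive, and it is also needed so that the orientation type of $\psi=\tilde B\circ\phi\circ\tilde A^{-1}$ matches that of $\phi$.
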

\begin{proof}
Suppose $\omega = \partial_{-}\hstrip_{\cind} = \partial_{+}\hstrip_{\cind}$ is a leaf of type {\typeCycle} for some $\cind\in \Aind$.
Let also \[ (a,b)\times\{-1\} = \partial_{-}\strip_{\cind} \qquad \text{and} \qquad (c,d)\times\{1\} = \partial_{+}\strip_{\cind}\] be the boundary components of $\strip_{\cind}$, so 
\[
\strip_{\cind} \ = \ (a,b)\times\{-1\} \ \bigcup \ \RRR\times(-1,1) \ \bigcup \ (c,d)\times\{1\}.
\]
Since $\omega = \partial_{-}\hstrip_{\cind} = \partial_{+}\hstrip_{\cind}$, it follows that 
\[\strip_{\cind} = \qmap^{-1}(\hstrip_{\cind}) \subset \bigsqcup\limits _{\aind\in \Aind} \strip_{\aind}.\]
But $\strip_{\cind}$ is an open closed subset of $\bigsqcup\limits _{\aind\in \Aind} \strip_{\aind}$, whence from the definition of factor topology on $\stripSurf$ it follows that $\hstrip_{\cind}$ is open closed in $\stripSurf$, and so it coincides with $\stripSurf$.

Thus $\stripSurf$ is obtained by gluing $(c,d)\times\{+1\}$ with $(a,b)\times\{-1\}$ by an affine homeomorphism $\phi_{\cind}$.
If $\phi_{\cind}$ preserves orientation then $\stripSurf$ is homeomorphic with $C$.
Otherwise, $\stripSurf$ is homeomorphic with $M$.
\end{proof}

\begin{definition}
A stripped surface will be called \myemph{reduced} if it has no leaves of types {\typeCycle} and {\typeReduce} that is every leaf of type {\typeGlued} is of type {\typeSpec}.
\end{definition}

\begin{theorem}\label{th:reducing_stripped_surfaces}
Every connected stripped surface $\stripSurf$ with countable base is $\FF$-homeo\-morphic either to a cylinder $C$ or to a M\"obius band $M$, or to a reduced surface.
\end{theorem}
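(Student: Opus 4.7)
The plan is to reduce to the statement by systematically eliminating leaves of types \typeCycle\ and \typeReduce\ from $\stripSurf$, using Lemma~\ref{lm:reducing_c2}, Lemma~\ref{lm:reducing_strips}, and Corollary~\ref{cor:reducing_infinite_strips}.

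The first observation is a dichotomy: if $\stripSurf$ has any leaf of type \typeCycle, then Lemma~\ref{lm:reducing_c2} already gives $\stripSurf \cong C$ or $M$, so we are done. Assume from now on that $\stripSurf$ has no \typeCycle\ leaves. Introduce the auxiliary graph $G$ whose vertices are the strips $\{\hstrip_{\aind}\}_{\aind\in\Aind}$ and whose edges correspond to \typeReduce\ leaves. A single strip $\hstrip_{\aind}$ can contribute at most one edge to $G$ through its lower side (this requires $\partial_{-}\strip_{\aind}$ to be a single interval entirely glued to an entire one-piece boundary of another strip) and at most one through its upper side, so every vertex of $G$ has degree at most $2$. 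Consequently each connected component of $G$ is either a cycle or a (finite, half-infinite, or bi-infinite) path. Because $\stripSurf$ has countable base, $\Aind$ is countable, and so is every such component.

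Suppose $G$ contains a cycle $\hstrip_{\aind_1},\dots,\hstrip_{\aind_n}$. Each $\hstrip_{\aind_i}$ has both of its sides glued within the cycle, hence the saturated open set $\bigcup_{i=1}^{n}\hstrip_{\aind_i}$ is also closed; by connectedness it equals $\stripSurf$. Delete one edge of the cycle, obtaining a finite path, and collapse this path to a single strip $\strip^{*}$ by applying Lemma~\ref{lm:reducing_strips} $n-1$ times; when necessary we first compose with the $\FF$-homeomorphism $(x,y)\mapsto(x,-y)$ to align the orientations $\varepsilon,\varepsilon'$ of successive \typeReduce\ gluings. The remaining (deleted) edge now identifies the lower and the upper boundary intervals of $\strip^{*}$, producing a leaf of type \typeCycle\ on the new (still $\FF$-homeomorphic) surface. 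A second application of Lemma~\ref{lm:reducing_c2} then gives $\stripSurf \cong C$ or $M$.

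If $G$ contains no cycle, each component is a path. For every component, merge all its strips into a single model strip: in the finite case iterate Lemma~\ref{lm:reducing_strips}, and in the half- or bi-infinite case use (a straightforward variant of) Corollary~\ref{cor:reducing_infinite_strips}, again flipping individual strips by $(x,y)\mapsto (x,-y)$ to match $\varepsilon,\varepsilon'$ along the path. Because the homeomorphisms furnished by Lemma~\ref{lm:reducing_strips} and Corollary~\ref{cor:reducing_infinite_strips} preserve horizontal lines, they descend to an $\FF$-homeomorphism between $\stripSurf$ and the stripped surface $\stripSurf'$ obtained by replacing each path-component of $G$ by its merged strip and retaining all remaining \typeSpec-gluings (which are unaffected, as they concern proper boundary intervals of the original strips, not whole boundaries). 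In $\stripSurf'$ every former \typeReduce\ leaf has become internal, no new \typeCycle\ leaves appear (otherwise the corresponding path would have closed into a cycle in $G$), and the \typeSpec\ leaves are preserved, so $\stripSurf'$ is reduced.

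The main obstacle is the bookkeeping in the path-merging step: one must check that in the resulting model strip the collection of remaining boundary intervals (those not consumed by \typeReduce\ gluings) still has mutually disjoint closures, and that the former \typeSpec\ gluings acquire well-defined affine identifications after the rescalings implicit in Lemma~\ref{lm:reducing_strips}. Both points follow from the fact that the homeomorphism $\dif$ constructed there is piecewise affine in the $x$-direction on any neighbourhood of a boundary interval, so affinity of the $\phi_{\bind}$ is preserved (up to the choice of a new affine parametrization of the merged boundary), which is exactly why the affine hypothesis in Definition~\ref{def:stripped_surface} was imposed.
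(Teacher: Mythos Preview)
Your argument is correct and follows essentially the same strategy as the paper: build a graph on the strips whose edges record \typeReduce\ (and, in the paper's version, \typeCycle) leaves, observe that every vertex has degree at most $2$, and then eliminate each connected component using Lemma~\ref{lm:reducing_strips}, Corollary~\ref{cor:reducing_infinite_strips}, and Lemma~\ref{lm:reducing_c2}. The only differences are organizational---you dispose of \typeCycle\ leaves before forming $G$ and take all strips as vertices, whereas the paper includes \typeCycle\ leaves as loops of $G$ and restricts to strips actually carrying such leaves---and you make explicit the point about preserving the affine boundary structure under merging, which the paper leaves implicit in the ``fixed outside $\mathbb{R}\times(-\tfrac12,\tfrac12)$'' clause of Lemma~\ref{lm:reducing_strips}.
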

\begin{proof}
Since $\stripSurf$ has countable base, it follows that the number of strips in $\stripSurf$ is at most countable.

We will now define a certain graph $G$ in the following way:
\begin{itemize}
\item the vertices of $G$ are strips of $\stripSurf$ containing leaves of types {\typeCycle} or {\typeReduce};

\item
if there exists a leaf $\omega$ of type {\typeCycle} such that $\omega = \partial_{-}\hstrip_{\cind} = \partial_{+}\hstrip_{\cind}$ for some $\cind\in\Aind$, then we assume that $\omega$ is a loop at vertex $\hstrip_{\cind}$;

\item two such vertices $\hstrip_{\cind}$ and $\hstrip_{\cind'}$ are connected by an edge in $G$ if and only if there exists a leaf $\omega$ of type {\typeReduce} such that $\omega = \partial_{-}\hstrip_{\cind} = \partial_{+}\hstrip_{\cind'}$ or $\omega = \partial_{+}\hstrip_{\cind} = \partial_{-}\hstrip_{\cind'}$.
\end{itemize}
It follows that each vertex of $G$ has degree either $1$ or $2$.
Therefore every connected component of $G$ has one of the forms (i)-(v) shown in Figure~\ref{fig:graph_c3}.
\begin{figure}[h]
\includegraphics[width=0.9\textwidth]{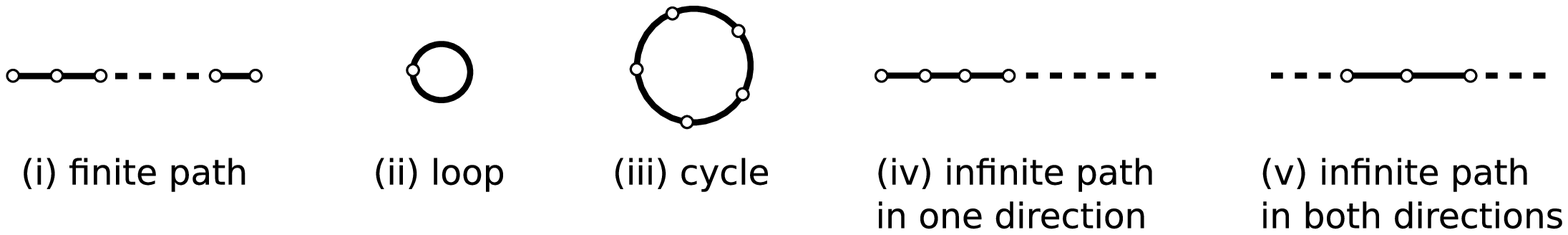}
\caption{}\label{fig:graph_c3}
\end{figure}

Suppose $G$ is non-empty and let $K$ be a connected component of $G$.
Consider the cases (i)-(v) of Figure~\ref{fig:graph_c3}.

\medskip

{\bf (i)}~Suppose $K$ is a finite non-closed path consisting of $n$ edges.
As noted in Remark~\ref{rem:eliminatinc_c3_leaf} each leaf of type {\typeReduce} can be regarded as an internal one after changing partition of $\stripSurf$ by strips without changing foliation $\FZ$. 
This means that each individual edge of $G$ with distinct ends can be reduced.
Applying Lemma~\ref{lm:reducing_strips} $n$ times one can completely reduce all edges of $K$.

\medskip

{\bf (ii)}~Suppose $K$ is a loop, so its edge corresponds to a leaf of type {\typeCycle}.
Then by Lemma~\ref{lm:reducing_c2} $\stripSurf$ is $\FF$-homeomorphic either with $C$ or with $M$.

\medskip

{\bf (iii)}~Suppose $K$ is a cycle of $n$ edges.
Then by arguments similar to (i) the situation reduces to the case (ii).

\medskip

{\bf (iv)}~Suppose $K$ is an infinite in one direction path.
Then the arguments of the case (i) can not be applied as it requires to consider infinite sequence.

Let $\hstrip_0, \hstrip_1, \ldots$ be an infinite sequence of strips in $\stripSurf$ corresponding to vertices of $K$ such that $\omega_i$, $i\geq0$ is a leaf of type {\typeReduce} being an edge between vertices $\hstrip_{i}$ and $\hstrip_{i+1}$ in $K$.
Interchanging $\partial_{-}\strip_i$ with $\partial_{+}\strip_i$ one may assume that 
\[
\omega_i = \partial_{+}\hstrip_{i} = \partial_{-}\hstrip_{i+1}, \qquad i\geq0.
\]

Notice that each $\stripS_i$ is embedded into $\stripSurf$.
Denote $\stripT = \cup_{i=1}^{\infty}\hstrip_i$, so $\hstrip_0$ is omitted.
Then it follows from Corollary~\ref{cor:reducing_infinite_strips} that $\stripT$ is $\FF$-homeomorphic to a model strip having only one boundary component.

\begin{sublemma}
$\omega_0$ splits $\stripSurf$ so that $\stripT\setminus\omega_0$ is a connected component of $\stripSurf\setminus\omega_0$.
\end{sublemma}
\begin{proof}
It suffices to show that $\stripT\setminus\omega_0$ is open and closed in $\stripSurf\setminus\omega_0$.

1) First we will check the \myemph{openness of $\stripT\setminus\omega_0$}.
Let $x\in \stripT\setminus\omega_0$.

a) If $x\in \Int{\hstrip_i} \subset \stripT$ for some $i\geq1$, then $\Int{\hstrip_i}$ is an open neighborhood of $x$ in $\stripT$.

b) If $x\in \omega_i = \partial_{+}\hstrip_{i} = \partial_{-}\hstrip_{i+1}$, $i\geq1$, then $x$ has an open neighborhood $U$ intersecting $\stripS_{i}$ and $\stripS_{i+1}$ only, and so $U\subset\stripT$.

Thus $\stripT\setminus\omega_0$ is open in $\stripSurf\setminus\omega_0$.
 
\medskip

2) Now let us show that \myemph{$\stripT\setminus\omega_0$ is closed in $\stripSurf\setminus\omega_0$}.
Let $\{x_j\}_{j\in\NNN} \subset \stripT\setminus\omega_0$ be a sequence converging to some $x\in\stripSurf\setminus\omega_0$.
We should prove that $x\in \stripT\setminus\omega_0$.
Consider two cases.

a) Suppose $x \in \Int{\hstrip_{\aind}}$ for some $\aind\in\Aind$.
Since $\Int{\hstrip_{\aind}}$ is an open neighborhood of $x$ in $\stripSurf\setminus\omega_0$, it follows that $x_j\in\Int{\hstrip_{\aind}}$ for some $j\in\NNN$.
But $x_j\in \hstrip_i$ for some $i\geq1$, whence $x \in \hstrip_{\aind} = \hstrip_i \subset \stripT$ as well.

b) Suppose $x \in \partial_{-}\hstrip_{\aind}\cap\partial_{+}\hstrip_{\aind'}$ for some $\aind,\aind'\in\Aind$.
Then $x$ has a neighborhood $U$ intersecting $\hstrip_{\aind}$ and $\hstrip_{\aind'}$ only.
Take $x_j\in U$. 
Then $x_j \in \hstrip_{\aind}\cup \hstrip_{\aind'}$ which implies that one of these strips coincides with $\hstrip_i$ for some $i\geq1$.
But $\partial\hstrip_i$ intersects only $\partial\hstrip_{i+1}$ and $\partial\hstrip_{i-1}$ for $i>1$.
Hence $\hstrip_{\aind}$ and $\hstrip_{\aind'}$ are contained in $\stripT$, and so $x\in\stripT$ as well.
\end{proof}

It follows from this lemma that one can replace $\stripT$ with a model strip, and so the situation reduces to the case when $K$ consists of a unique edge with distinct vertices.
Then by the case (i) it can be completely eliminated.

\medskip

{\bf (v)}~Finally suppose that $K$ is an infinite in two directions path.
By arguments of (iv) one can replace each of infinite ends of $K$ with a model strip, and then by (i) eliminate all edges of $K$.
This also implies that $\stripSurf$ is $\FF$-homeomorphic with an open model strip $\RRR\times(-1,1)$.

\medskip

Since each strip $\hstrip_{\aind}$ may correspond to at most one connected component of $G$, it follows that one can apply cases (i)-(v) mutually to each of the connected components of $G$.
This allows to eliminate all leaves of type {\typeReduce} or prove that $\stripSurf$ is $\FF$-homeomorphic either with $C$ or with $M$.
\end{proof}

\section{Homeomorphisms group of $\FZ$}\label{sect:homeomorphisms_of_foliations}
Let $\stripSurf$ be a stripped surface.
Denote by $\HFZ$ the group of $\FF$-homeomorphisms of $\stripSurf$ preserving $\FZ$, that is for each $\dif\in\HFZ$ and each leaf $\omega\in\FZ$ the image $\dif(\omega)$ is a leaf of $\FZ$ as well.

Let also $\HOFZpr$ be the subgroup of $\HFZ$ consisting of homeomorphisms $\dif$ such that $\dif(\omega)=\omega$ for each leaf of $\FZ$ and $\dif$ preserves orientation of $\omega$.

Endow $\HFZ$ with the corresponding compact open topology and let $\HOFZ$ be the identity path component of $\HFZ$, so it consists of homeomorphisms $\dif\in\HFZ$ isotopic in $\HFZ$ to $\id_{\stripSurf}$.

Let $\Sigma(\FZ)$ be the union of leaves of types {\typeBd} and {\typeSpec}.
Evidently, $\dif(\Sigma(\FZ)) = \Sigma(\FZ)$ for each $\dif\in\HFZ$.

First we will consider the case when $\stripSurf$ is a model strip.
\begin{lemma}\label{lm:homeo_of_model_strips}
Let $\strip \subset \RRR\times[-1,1]$ be a model strip and $\gdif\in\HF{\strip}$.
Then 
\begin{equation}\label{equ:formulae_for_h}
\gdif(x,y) = \bigl( \xcoord(x,y), \ \ycoord(y) \bigr),
\end{equation}
where $\ycoord:[-1,1]\to[-1,1]$ is a homeomorphism, and $\xcoord:\strip\to\RRR$ is a continuous function such that for each $y\in(-1,1)$ the correspondence $x \mapsto \xcoord(x,y)$ is a homeomorphism $\RRR\to\RRR$.
\end{lemma}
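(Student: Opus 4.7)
The plan is to extract $\ycoord$ as the common $y$-coordinate of the image of a horizontal leaf and to set $\xcoord$ to be the first coordinate of $\gdif$. The key structural fact is that $\gdif$, being a foliation-preserving homeomorphism, sends each leaf of $\FS$ to a leaf of $\FS$, and each leaf of $\FS$ is contained in a single horizontal line $\RRR\times\{t\}$ for some $t\in[-1,1]$.

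First I would argue that interior leaves go to interior leaves and boundary leaves go to boundary leaves. Since $\gdif$ is a homeomorphism of the $2$-manifold with boundary $\strip$, it must carry $\partial\strip$ onto $\partial\strip$. Boundary leaves of $\FS$ are precisely the connected components of $\partial\strip$, so they are permuted among themselves by $\gdif$, and then the leaves $\RRR\times\{t\}$ with $t\in(-1,1)$ are permuted among themselves as well. Consequently, for each $t\in(-1,1)$ there is a unique $\ycoord(t)\in(-1,1)$ with $\gdif(\RRR\times\{t\})=\RRR\times\{\ycoord(t)\}$, which defines $\ycoord$ on $(-1,1)$ and shows that the second coordinate of $\gdif(x,y)$ depends only on $y$ for $y\in(-1,1)$.

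Next I would verify that $\ycoord\colon(-1,1)\to(-1,1)$ is a homeomorphism. Continuity is immediate: for a fixed $x_0\in\RRR$ and $t_n\to t$, $\gdif(x_0,t_n)\to\gdif(x_0,t)$, so $\ycoord(t_n)\to\ycoord(t)$. Applying the same argument to $\gdif^{-1}$ gives bijectivity, and a continuous bijection of $(-1,1)$ onto itself is automatically a strictly monotone homeomorphism. To extend $\ycoord$ to $[-1,1]$, I would use monotonicity: the one-sided limits $\lim_{t\to\pm 1^{\mp}}\ycoord(t)$ exist and belong to $\{-1,1\}$. The main point to check is that these limit values coincide with the $y$-coordinate of $\gdif$ on $\partial\strip$: if $(x_0,\pm 1)\in\partial\strip$ and $t_n\to\pm 1$ from inside $(-1,1)$, continuity of $\gdif$ at $(x_0,\pm 1)$ forces the second coordinate of $\gdif(x_0,\pm 1)$ to equal $\lim\ycoord(t_n)$. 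Since $\gdif$ also permutes boundary leaves bijectively, the extended map $\ycoord\colon[-1,1]\to[-1,1]$ is a monotone continuous bijection, hence a homeomorphism.

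Finally, I would define $\xcoord(x,y)$ to be the first coordinate of $\gdif(x,y)$, so that $\gdif(x,y)=(\xcoord(x,y),\ycoord(y))$; continuity of $\xcoord$ is inherited from $\gdif$. For $y\in(-1,1)$, the restriction $\gdif|_{\RRR\times\{y\}}$ is a homeomorphism onto $\RRR\times\{\ycoord(y)\}$, which means exactly that $x\mapsto\xcoord(x,y)$ is a homeomorphism $\RRR\to\RRR$. The main obstacle, and the only nontrivial point, is the boundary analysis of $\ycoord$: one must rule out that the second coordinate of $\gdif$ develops an $x$-dependence on $\partial\strip$, and the continuity argument above is exactly what handles this.
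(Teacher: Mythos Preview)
Your proposal is correct and follows the same skeleton as the paper's proof: use that $\gdif$ permutes leaves to see that the second coordinate depends only on $y$, deduce that $\ycoord\colon(-1,1)\to(-1,1)$ is a strictly monotone continuous bijection, and extend it to $[-1,1]$. The paper dispatches this in three sentences and does not spell out the boundary consistency or the interior/boundary leaf dichotomy that you treat carefully, but the underlying argument is identical.
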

\begin{proof}
Since $\gdif$ preserves leaves of $\FS$, i.e.\! the lines $\RRR\times y$, $y\in(-1,1)$, it follows that $\ycoord$ does not depend on $x$.
Moreover, as $\gdif$ homeomorphically maps leaves onto leaves, the map $x\mapsto\xcoord(x,y)$ is a homeomorphism $\RRR\times y\to\RRR \times \ycoord(y)$.
Finally, $\ycoord:(-1,1)\to(-1,1)$ is a strictly monotone surjective continuous function, for each $y\in(-1,1)$.
Therefore it extends to a self-homeomorphism of $[-1,1]$.
\end{proof}

The following lemma is easy and we leave it for the reader.
\begin{lemma}\label{lm:lifting_of_strip_homeo}
Let $\hstrip\subset\stripSurf$ be a strip containing leaves from $\Sigma(\FZ)$, $\xi:\strip\to\hstrip$ be the corresponding chart, and $\dif\in\HFZ$.
If $\dif(\hstrip)=\hstrip$, then $\dif$ lifts to a homeomorphism $\gdif:\strip\to\strip$ of the model strip $\strip$ such that $\xi\circ\gdif = \dif\circ\xi$.
\qed
\end{lemma}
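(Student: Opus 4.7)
The plan is to construct $\gdif$ first on the open interior $\Int\strip=\RRR\times(-1,1)$, where $\xi$ acts as a homeomorphism onto its image, and then extend by continuity to the boundary $\partial\strip$.

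First observe that the quotient map $\qmap:\bigsqcup_{\aind}\strip_{\aind}\to\stripSurf$ is open: for any open $U\subset\bigsqcup_{\aind}\strip_{\aind}$, its saturation is $U$ together with sets of the form $\phi_{\bind}(U\cap\bdY_{\bind})$ and $\phi_{\bind}^{-1}(U\cap\bdX_{\bind})$, each of which is open since the gluing maps $\phi_{\bind}$ are homeomorphisms between open intervals. Hence $\xi$, being the composition of an open embedding with $\qmap$, is a continuous open surjection. Combined with the injectivity of $\xi$ on $\Int\strip$, this shows that $\xi|_{\Int\strip}$ is a homeomorphism onto the open subset $\Int\hstrip := \xi(\Int\strip)$ of $\hstrip$.

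Next, one checks that $\dif(\Int\hstrip)=\Int\hstrip$. This uses $\dif(\hstrip)=\hstrip$, the invariance $\dif(\Sigma(\FZ))=\Sigma(\FZ)$ recorded earlier, and the hypothesis that $\hstrip$ contains leaves of $\Sigma(\FZ)$ (which rigidifies the interior/boundary decomposition of $\hstrip$ and rules out the degenerate cylinder/Möbius cases, where $\dif$ could a priori swap internal leaves with a self-glued boundary leaf). Set
\[
\gdif(z) := \bigl(\xi|_{\Int\strip}\bigr)^{-1}\bigl(\dif(\xi(z))\bigr), \qquad z\in\Int\strip,
\]
obtaining a self-homeomorphism of $\Int\strip$.

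The main step is the extension of $\gdif$ to $\partial\strip$. For $z_0\in\partial\strip$, pick any sequence $z_n\in\Int\strip$ with $z_n\to z_0$. The set $\xi^{-1}(\dif(\xi(z_0)))\cap\partial\strip$ contains at most two points, since $\xi|_{\partial_{-}\strip}$ and $\xi|_{\partial_{+}\strip}$ are each injective. To show $\gdif(z_n)$ converges to a uniquely determined element of this set, choose a small connected half-disk neighborhood $D$ of $z_0$ in $\strip$; then $D\cap\Int\strip$ is connected, so $\gdif(D\cap\Int\strip)$ is connected in $\Int\strip$. By continuity of $\dif$ and $\xi$, shrinking $D$ forces $\gdif(D\cap\Int\strip)$ into an arbitrarily small neighborhood of the candidate preimage set in $\strip$; since the two candidate points (when there are two) lie in distinct boundary intervals with disjoint closures (property (2) of a model strip), small enough neighborhoods of them in $\strip$ are disjoint, and connectedness forces $\gdif(D\cap\Int\strip)$ to accumulate at only one of them. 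This point is declared to be $\gdif(z_0)$, independent of the chosen sequence.

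The resulting map $\gdif:\strip\to\strip$ is continuous by construction. Applying the same procedure to $\dif^{-1}\in\HFZ$ gives a continuous two-sided inverse, so $\gdif$ is a homeomorphism; the identity $\xi\circ\gdif=\dif\circ\xi$ holds on the dense subset $\Int\strip$ by definition and therefore on all of $\strip$ by continuity. The main obstacle is the uniqueness in the boundary extension: it is the connectedness of small half-disks around each boundary point, together with the disjoint-closures condition on the boundary intervals of the model strip, that allows us to select the correct preimage among the (at most two) candidates without ambiguity.
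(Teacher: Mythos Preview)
The paper does not provide a proof of this lemma; it simply declares the result easy and leaves it to the reader. Your outline is a correct and natural way to supply the missing argument.

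One correction: the quotient map $\qmap$ is \emph{not} open in general. If $D\subset\strip_{\aind}$ is a half-disk meeting a boundary interval $\bdY_{\bind}$ that is glued into a different strip $\strip_{\aind'}$, then the saturation $\qmap^{-1}(\qmap(D))=D\cup\phi_{\bind}(D\cap\bdY_{\bind})$ meets $\strip_{\aind'}$ only in a one-dimensional arc, which is not open there. Fortunately this does not damage your proof: what you actually use is that $\xi|_{\Int\strip}$ is a homeomorphism onto the open set $\Int\hstrip$, and this follows directly because any open $U\subset\Int\strip$ is already $\qmap$-saturated (it meets no glued boundary interval), so $\qmap(U)$ is open by the very definition of the quotient topology.

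The remainder of the argument --- the verification that $\dif$ preserves $\Int\hstrip$ (every leaf of $\partial\hstrip$ lies either in $\Sigma(\FZ)$, in $\partial\stripSurf$, or in the topological frontier of $\hstrip$ in $\stripSurf$, and each of these is $\dif$-invariant once the \typeCycle{} case is excluded by hypothesis), and the connectedness trick for selecting the correct one of the at most two boundary preimages --- is sound.
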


\begin{lemma}\label{lm:HOFZpr_is_contractible}
The group $\HOFZpr$ is contractible.
\end{lemma}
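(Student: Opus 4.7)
The plan is to construct an explicit deformation retraction $H:\HOFZpr\times[0,1]\to\HOFZpr$ from $\HOFZpr$ to $\{\id_\stripSurf\}$ by interpolating linearly along each leaf of $\FZ$, performed in the canonical $x$-coordinate of each model strip. The hypothesis that every gluing map $\phi_\bind$ is affine is precisely what makes the linear interpolations defined on different strips match, and verifying this compatibility is the main obstacle.

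First I set up local coordinates. Fix $\dif\in\HOFZpr$. Since $\dif$ preserves every leaf, it preserves every strip $\hstrip_\aind=\xi_\aind(\strip_\aind)$, so by Lemma~\ref{lm:lifting_of_strip_homeo} it lifts to $\gdif_\aind:\strip_\aind\to\strip_\aind$ with $\xi_\aind\circ\gdif_\aind=\dif\circ\xi_\aind$. Lemma~\ref{lm:homeo_of_model_strips} then gives $\gdif_\aind(x,y)=(\xcoord_\aind(x,y),\ycoord_\aind(y))$. Because $\dif$ fixes every leaf setwise one has $\ycoord_\aind(y)=y$ for $y\in(-1,1)$, and hence for all $y\in[-1,1]$ by continuity. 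Because $\dif$ preserves the orientation of each leaf, $x\mapsto\xcoord_\aind(x,y)$ is an orientation-preserving homeomorphism of $\RRR$ for $y\in(-1,1)$, and of each boundary interval onto itself for $y=\pm1$.

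Next I define
\[
H(\dif,t)(z) \;=\; \xi_\aind\bigl((1-t)\xcoord_\aind(x,y)+tx,\; y\bigr)\qquad\text{whenever } z=\xi_\aind(x,y),
\]
and check two things. (i) \emph{Well-definedness on $\stripSurf$:} if $(x,y)\in\bdY_\bind\subset\strip_\aind$ is identified with $(x',y')\in\bdX_\bind\subset\strip_{\aind'}$ by the affine gluing $\phi_\bind$, the intertwining relation $\phi_\bind\circ\xcoord_\aind(\cdot,y)=\xcoord_{\aind'}(\cdot,y')\circ\phi_\bind$, obtained from $\xi_\aind\circ\gdif_\aind=\dif\circ\xi_\aind$, combines with the fact that the affine map $\phi_\bind$ commutes with convex combinations of two points to give
\[
\phi_\bind\bigl((1-t)\xcoord_\aind(x,y)+tx\bigr) \;=\; (1-t)\xcoord_{\aind'}(x',y')+tx',
\]
so the two chart formulas for $H(\dif,t)$ at the glued point agree. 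This is the crux and the one place where affineness of $\phi_\bind$ is essential. (ii) \emph{$H(\dif,t)\in\HOFZpr$:} for each $y$, $x\mapsto(1-t)\xcoord_\aind(x,y)+tx$ is a convex combination of two strictly increasing surjective homeomorphisms (onto $\RRR$, or onto the same boundary interval), hence itself such; so $H(\dif,t)$ restricts on each leaf to an orientation-preserving self-homeomorphism, and these assemble to a leaf-preserving homeomorphism of $\stripSurf$. On the circle leaves that occur only in the exceptional models $C$ and $M$, the arc-wise interpolations glue via $\phi_c$ or $\phi_m$ (again using affineness) to a continuous bijection of the compact circle, hence a homeomorphism.

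Finally I verify continuity: the joint map $(\dif,t,z)\mapsto H(\dif,t)(z)$ is continuous, since $\xcoord_\aind(x,y)$ depends continuously on $\dif$ in the compact-open topology (via the continuous evaluation $C(\strip_\aind,\RRR)\times\strip_\aind\to\RRR$) and continuously on $(x,y)$ by Lemma~\ref{lm:homeo_of_model_strips}. As $\stripSurf$ is locally compact Hausdorff, the exponential law for the compact-open topology promotes this to continuity of $H:\HOFZpr\times[0,1]\to\HOFZpr$. Together with $H(\dif,0)=\dif$ and $H(\dif,1)=\id_\stripSurf$, this provides the desired contraction of $\HOFZpr$ to the identity.
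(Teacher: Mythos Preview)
Your proof is correct and follows essentially the same route as the paper: lift $\dif$ to each model strip, write it as $(x,y)\mapsto(\xcoord(x,y),y)$, interpolate linearly in the first coordinate via $G_t(x,y)=((1-t)\xcoord(x,y)+tx,y)$, and use affineness of the gluing maps $\phi_\bind(s)=us+v$ to verify $\phi_\bind\circ G_t=G'_t\circ\phi_\bind$ on overlapping boundary intervals. Your write-up is in fact slightly more thorough than the paper's (you explicitly check that the convex combination is again an increasing surjection, and you invoke the exponential law for continuity), but the core idea and computation are identical. One small remark: your aside about ``circle leaves'' in $C$ and $M$ is unnecessary and in fact misplaced, since every leaf of $\FZ$ is non-compact (an open interval or a copy of $\RRR$); the glued boundary leaf in $C$ or $M$ is still an interval, not a circle.
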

\begin{proof}
Let $\xi:\strip\to\hstrip$ be a strip of $\stripSurf$ and $\dif\in\HOFZpr$.
By assumption $\dif(\hstrip)=\hstrip$, whence by Lemma~\ref{lm:lifting_of_strip_homeo} $\dif|_{\hstrip}$ lifts to a self-homeomorphism $\gdif:\strip\to\strip$ such that $\xi\circ\gdif=\dif\circ\xi$.
Moreover, by Lemma~\ref{lm:homeo_of_model_strips} and assumption that $\dif$ preserves leaves with their orientations we have that
\[
\gdif(x,y) =  \bigl(\xcoord(x,y), y \bigr),
\]
where the correspondence $x \mapsto \xcoord(x,y)$, $y\in[-1,1]$, is a self homeomorphism of $\RRR$ preserving orientation.

Then an isotopy $G:\strip \times [0,1] \to \strip$ between $\gdif$ and $\id_{\strip}$ can be defined by the formula:
\begin{equation}\label{equ:contration_Hpr_on_strip}
G(x,y; t) = \bigl( (1-t)\xcoord(x,y)+tx, \ y \bigr)
\end{equation}
for $(x,y)\in\strip\subset\RRR\times[-1,1]$.

We will show that formulas for $G$ on distinct strips agree with \textit{affine} gluing maps $\phi_{\bind}$ for all $\bind\in \Bind$.
More precisely, let $\xi:\strip\to\hstrip$ and $\xi:\strip'\to\hstrip'$ be two strips of $\stripSurf$.
For the convenience of notation assume that 
\begin{align*}
\bdY & \ \subset \ \partial_{+}\strip \ \subset \ \RRR\times\{1\}, 
&
\bdX & \ \subset \ \partial_{-}\strip' \ \subset \ \RRR\times\{-1\},
\end{align*}
be two boundary components glued via an affine homeomorphism $\phi:\bdY \to \bdX$.
Let also $\gdif:\strip\to\strip$ and $\gdif':\strip'\to\strip'$ be the corresponding liftings of $\dif|_{\hstrip}$ and $\dif|_{\hstrip'}$ respectively, and $G$ and $G'$ the corresponding isotopies for $\gdif$ and $\gdif'$ given by~\eqref{equ:contration_Hpr_on_strip}.
We have to prove that for each $t\in[0,1]$ the following commutative diagram holds true:
\[
\begin{CD}
\bdY @>{G_t}>> \bdY  \\
@V{\phi}VV @VV{\phi}V \\
\bdX @>{G'_t}>> \bdX
\end{CD}
\]
This diagram trivially holds for $t=1$ when $G_1$ and $G'_1$ are identity maps.
Moreover it also holds for $t=0$ when $G_0=\gdif$ and $G'_0=\gdif'$, since $\dif$ agrees on $\xi(\bdY)=\xi'(\bdX)$, so
\begin{equation}\label{equ:diagram_for_t0}
\phi \circ \gdif = \gdif' \circ \phi: \bdY\longrightarrow\bdX.
\end{equation}

We can assume that 
\begin{align*}
\phi(s,1) &= (u s + v, -1), &
\gdif(x,1) &= \bigl( \xcoord(x,1), 1 \bigr)&
\gdif'(x,-1) &= \bigl( \xcoord'(x,-1), -1 \bigr)
\end{align*}
for some $u,v\in\RRR$, and continuous functions $\xcoord:\bdY\to\RRR$ and $\xcoord':\bdX\to\RRR$.
For simplicity we will also omit the second coordinate $\pm1$.
Then 
\begin{align*}
\phi(s) &= u s + v, &
\gdif|_{\bdY}(x) &= \xcoord(x), &
\gdif'|_{\bdX}(x) &= \xcoord'(x),
\end{align*}
and so we get from~\eqref{equ:diagram_for_t0} that 
\[
\phi \circ \gdif|_{\bdY}(x) = u \xcoord(x) + v =  \xcoord'(ux+v) = \gdif'|_{\bdX} \circ \phi.
\]
Hence 
\begin{align*}
\phi \circ G_t(x)
 &= u ( (1-t)\xcoord(x) + tx ) + v  \\
 &= (1-t) \bigl(u \xcoord(x) + v\bigr) + t(ux+v) \\ 
 &=  (1-t)\xcoord'(ux + v) + t(ux+v) \\
 &= G'_t \circ \phi(x).
\end{align*}
Thus isotopies~\eqref{equ:contration_Hpr_on_strip} agree on distinct model strips, and therefore they yield a unique isotopy between $\dif$ and $\id_{\stripSurf}$ in $\HOFZpr$.
One can easily check that such an isotopy is continuous in $(\dif,t)\in\HOFZpr\times I$ and so it yields a contraction of $\HOFZpr$.
\end{proof}

\begin{theorem}\label{th:characterzation_of_H0}
Let $\stripSurf$ be a connected reduced stripped surface and $\dif\in\HFZ$.
Then $\dif\in\HOFZ$ if and only if the following three conditions hold:
\begin{itemize}
\item[\rm(a)]
$\dif(\hstrip_{\aind})=\hstrip_{\aind}$ for all $\aind\in\Aind$;
\item[\rm(b)]
if $\gdif_{\aind}:\strip_{\aind}\to\strip_{\aind}$, $\gdif(x,y) = \bigl( \xcoord(x,y), \ \ycoord(y) \bigr)$, $\aind\in\Aind$, is a unique lifting of $\dif|_{\hstrip_{\aind}}$ given by~\eqref{equ:formulae_for_h}, then $\ycoord$ is increasing and $\xcoord(x,y)$ also increasing for each fixed $y\in(-1,1)$.
\item[\rm(c)]
$\dif$ leaves invariant each leaf $\omega \subset \Sigma(\FZ)$ and preserves its orientation.
\end{itemize}
Moreover, $\HOFZpr$ is a strong deformation retract of $\HOFZ$, and in particular, $\HOFZ$ is contractible as well.
\end{theorem}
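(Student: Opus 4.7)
The plan is twofold: prove the necessity of (a)--(c) by tracking an isotopy $\{\dif_t\}_{t\in[0,1]}$ in $\HFZ$ from $\id$ to $\dif$, then construct an explicit deformation retraction $H$ of $\HOFZ$ onto $\HOFZpr$ which yields both the ``if'' direction and, together with Lemma~\ref{lm:HOFZpr_is_contractible}, the contractibility of $\HOFZ$.

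For \emph{necessity}, since $\stripSurf$ is reduced, $\stripSurf\setminus\Sigma(\FZ)$ is the disjoint union of the interiors $\Int\hstrip_\aind$, which are exactly its connected components. For $p\in\Int\hstrip_\aind$ the continuous path $\dif_t(p)$ avoids the $\dif_t$-invariant set $\Sigma(\FZ)$, hence stays in one component; this forces $\dif_t(\Int\hstrip_\aind)=\Int\hstrip_\aind$ and, on taking closures, (a). For (c), each leaf $\omega\subseteq\Sigma(\FZ)$ is \emph{locally isolated} in $\Sigma(\FZ)$: boundary leaves are connected components of $\partial\stripSurf$, and each special leaf sits in a neighborhood built from at most two strips whose only non-internal leaves close to $\omega$ lie in $\omega$ itself. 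For $p\in\omega$, the set $\{t\in[0,1]:\dif_t(p)\in\omega\}$ is then open (by local isolation applied to $\dif_t(p)\in\Sigma(\FZ)$), closed ($\omega$ is closed in $\stripSurf$), and contains $0$, so it equals $[0,1]$; hence $\dif(\omega)=\omega$, with orientation preserved by continuity from $\id$. For (b), I lift $\dif_t|_{\hstrip_\aind}$ continuously in $t$ to $\gdif_{\aind,t}(x,y)=(\xcoord_t(x,y),\ycoord_t(y))$ starting from $\gdif_{\aind,0}=\id$; then $\ycoord_t$ and $x\mapsto\xcoord_t(x,y)$ are one-variable self-homeomorphisms depending continuously on $t$, so they cannot switch orientation.

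For the \emph{retraction and ``if'' direction}, take $\dif\in\HOFZ$ (satisfying (a) and (b) by necessity), choose lifts $\gdif_\aind(x,y)=(\xcoord(x,y),\ycoord(y))$ with $\ycoord$ increasing, and define $H_s(\dif)$ to be the homeomorphism of $\stripSurf$ whose lift on each $\strip_\aind$ is
\[
(x,y)\;\longmapsto\;\bigl(\xcoord(x,y),\;(1-s)\ycoord(y)+s\,y\bigr),\qquad s\in[0,1].
\]
The convex combination $\ycoord_s=(1-s)\ycoord+s\,\id$ is an increasing self-homeomorphism of $[-1,1]$ fixing $\pm1$, so the deformation leaves $\gdif_\aind|_{\partial\strip_\aind}$ unchanged and therefore respects the gluings $\phi_\bind$, descending to a well-defined $H_s(\dif)\in\HFZ$. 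One verifies: $H_0(\dif)=\dif$; $H_s(\dif)=\dif$ for $\dif\in\HOFZpr$ (since then $\ycoord=\id$); $H_1(\dif)\in\HOFZpr$ (its $y$-coordinate is the identity and orientation on each leaf is preserved by (b)); and continuity of $(\dif,s)\mapsto H_s(\dif)$ in the compact-open topology is routine. This gives the strong deformation retraction, and contractibility of $\HOFZ$ follows from Lemma~\ref{lm:HOFZpr_is_contractible}. For the ``if'' direction, applying the same formula to any $\dif$ satisfying (a)--(c) yields an isotopy in $\HFZ$ from $\dif$ to $H_1(\dif)\in\HOFZpr$, which is joined to $\id$ by Lemma~\ref{lm:HOFZpr_is_contractible}, giving $\dif\in\HOFZ$.

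The main technical point is that the strip-wise deformation descends to a homeomorphism of $\stripSurf$: this works precisely because $\ycoord_s$ fixes $\pm1$, so the boundary values of the lifts are unchanged and the (arbitrary) gluings $\phi_\bind$ are automatically respected --- note that, in contrast with Lemma~\ref{lm:HOFZpr_is_contractible}, affineness of $\phi_\bind$ plays no role here. A secondary delicate point is the continuous choice of lifts $\gdif_{\aind,t}$ needed for (b) when the chart $\xi_\aind$ fails to be injective on $\partial\strip_\aind$ (which can occur in reduced surfaces through type~\typeSpec self-identifications); this is handled by taking the unique lift on $\Int\strip_\aind$, where $\xi_\aind$ is injective, and extending by continuity.
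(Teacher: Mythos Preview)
Your proof is correct and follows essentially the same route as the paper's. Both arguments introduce the set $Q$ of homeomorphisms satisfying (a)--(c) (implicitly in your case), prove $\HOFZ\subset Q$ by tracking an isotopy, and then construct the strong deformation retraction of $Q$ onto $\HOFZpr$ via the identical strip-wise formula $(x,y)\mapsto\bigl(\xcoord(x,y),(1-s)\ycoord(y)+sy\bigr)$, noting that this is fixed on $\partial\strip_\aind$ and hence glues. The only notable presentational difference is in (c): the paper simply asserts that each leaf $\omega\subset\Sigma(\FZ)$ is a \emph{path component} of $\Sigma(\FZ)$, whereas you supply the underlying reason (local isolation, coming from the disjoint-closures condition in Definition~\ref{def:model_strip}(2)); your open--closed argument on $[0,1]$ is then just an unwinding of the same path-component statement.
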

\begin{proof}
Let $Q \subset \HFZ$ be a subgroup consisting of maps satisfying (a), (b), (c).
We should prove that $\HOFZ = Q$.

\medskip

{\em Inclusion $\HOFZ \subset Q$.}
Suppose $\dif\in\HOFZ$, so there exists an isotopy $\dif_t:\stripSurf\to\stripSurf$, $t\in I$, such that $\dif_0=\id_{\stripSurf}$, $\dif_1= \dif$, and $\dif_t\in\HFZ$ for all $t\in I$.

(c) Since $\dif_t(\Sigma(\FZ)) = \Sigma(\FZ)$ and every leaf $\omega \subset \Sigma(\FZ)$ is a \myemph{path component} of $\Sigma(\FZ)$, it follows that
\[ \dif(\omega) = \dif_1(\omega)=\dif_t(\omega)=\dif_0(\omega) = \omega, \qquad t\in I.\]
Moreover, the restriction of $\dif_t|_{\omega}:\omega \to \omega$ is an isotopy between $\id_{\omega}$ and $\dif|_{\omega}$, whence $\dif|_{\omega}$ preserves orientation.

\medskip

(a) It follows further, that $\dif$ also leaves invariant every connected components of $\stripSurf\setminus\Sigma(\FZ)$.
But every such component is the interior of some strip, whence $\dif(\hstrip_{\aind})=\hstrip_{\aind}$ for each $\aind\in\Aind$, which completes (a).

\medskip

(b) Moreover, let $(\gdif_{\aind})_t:\Int{\strip_{\aind}}\to\Int{\strip_{\aind}}$ be a lifting of $\dif_t|_{\Int{\hstrip_{\aind}}}$.
Then $(\gdif_{\aind})_t = \bigl( \xcoord_t(x,y), \ \ycoord_t(y) \bigr)$, where $\xcoord_t$ and $\ycoord_t$ are continuous in $(x,y,t)$.
Moreover, $\{\ycoord_t\}_{t\in I}$ is an isotopy between $\ycoord_0 =\id_{(-1,1)}$ and $\ycoord_1=\ycoord$.
Hence $\ycoord$ is increasing.

Similarly, for each fixed $y\in(-1,1)$ the correspondence $x\mapsto \xcoord_t(x,y)$ is also an isotopy between $\id_{\RRR}$ and $x\mapsto\xcoord(x,y)$.
Hence the latter is also an increasing function.

\medskip

To prove the inverse inclusion $\HOFZ \supset Q$ we need the following lemma:
\begin{sublemma}
$\HOFZpr$ is a strong deformation retract of $Q$.
\end{sublemma}
\begin{proof}
Let $\dif\in Q$, $\aind\in\Aind$, and $\gdif_{\aind}:\strip_{\aind}\to\strip_{\aind}$ be a lifting of $\dif$, see Lemma~\ref{lm:lifting_of_strip_homeo}.
So by (c)
\[ \gdif_{\aind}(x,y) = \bigl( \xcoord_{\aind}(x,y), \ \ycoord_{\aind}(y) \bigr), \]
where then $\ycoord$ is increasing and $\xcoord_{\aind}(x,y)$ is increasing in $x$ for each fixed $y\in(-1,1)$.
Define an isotopy $F_{\aind}:\strip_{\aind}\times I\to \strip_{\aind}$ by
\begin{equation}\label{equ:deformation_of_Q}
F_{\aind}(x,y,t) = \bigl( \xcoord_{\aind}(x,y), ty + (1-t) \ycoord_{\aind}(y) \bigr).
\end{equation}
Then $F_{\aind}$ is fixed on $\partial\strip_{\aind}$, $(F_{\aind})_0 = \gdif_{\aind}$ and $(F_{\aind})_1$ preserves each leaf of $\FCanon{\strip_{\aind}}$ with its orientation.
Hence the family of isotopies $\{F_{\aind}\}_{\aind\in\Aind}$ yield an isotopy $F(\dif): \stripSurf\times I \to I$ of $\dif$ to a diffeomorphism which preserves each leaf of $\FZ$ with its orientation.
In other words, $F(\dif)_0=\dif$, $F(\dif)_t\in Q$ for all $t\in I$, and $F(\dif)_1 \in \HOFZpr$.

One can easily check that the map $F:Q\times I \to Q$ given by $F(\dif,t) = F(\dif)_t$ is continuous.
Moreover, if $\dif\in\HOFZpr$, then in~\eqref{equ:deformation_of_Q} $\ycoord_{\aind}(y)=y$, whence $F_{\aind}(x,y,t)=\gdif_{\aind}(x,y)$, and so $F(\dif,t) = \dif$ for all $t\in I$.
In other words, $F$ is a deformation fixed on $\HOFZpr$, whence $\HOFZpr$ is a strong deformation retract of $Q$.
\end{proof}

Since $\HOFZpr$ is connected (even contractible), it follows from this lemma that $Q$ is also connected.
But $\id_{\stripSurf}\in Q$, whence $Q \subset \HOFZ$.
Theorem~\ref{th:characterzation_of_H0} completed.
\end{proof}

\def\cprime{$'$} \def\cprime{$'$} \def\cprime{$'$} \def\cprime{$'$}
\providecommand{\bysame}{\leavevmode\hbox to3em{\hrulefill}\thinspace}
\providecommand{\MR}{\relax\ifhmode\unskip\space\fi MR }
\providecommand{\MRhref}[2]{%
  \href{http://www.ams.org/mathscinet-getitem?mr=#1}{#2}
}
\providecommand{\href}[2]{#2}


\begin{thebibliography}{10}

\bibitem{BolsinovFomenko:1997}
A.~V. Bolsinov and A.~T. Fomenko, \emph{Vvedenie v topologiyu integriruemykh
  gamiltonovykh sistem ({I}ntroduction to the topology of integrable
  hamiltonian systems)}, ``Nauka'', Moscow, 1997 (Russian). \MR{MR1664068
  (2000g:37079)}

\bibitem{Boothby:AJM_1:1951}
William~M. Boothby, \emph{The topology of regular curve families with multiple
  saddle points}, Amer. J. Math. \textbf{73} (1951), 405--438. \MR{0042692
  (13,149b)}

\bibitem{Boothby:AJM_2:1951}
\bysame, \emph{The topology of the level curves of harmonic functions with
  critical points}, Amer. J. Math. \textbf{73} (1951), 512--538. \MR{0043456
  (13,266a)}

\bibitem{JenkinsMorse:AJM:1952}
James Jenkins and Marston Morse, \emph{Contour equivalent pseudoharmonic
  functions and pseudoconjugates}, Amer. J. Math. \textbf{74} (1952), 23--51.
  \MR{0048642 (14,46b)}

\bibitem{JenkinsMorse:APNASUSA:1953}
\bysame, \emph{Conjugate nets, conformal structure, and interior
  transformations on open {R}iemann surfaces}, Proc. Nat. Acad. Sci. U. S. A.
  \textbf{39} (1953), 1261--1268. \MR{0058724 (15,415e)}

\bibitem{JenkinsMorse:ActaMath:1954}
\bysame, \emph{Curve families {$F^*$} locally the level curves of a
  pseudoharmonic function}, Acta Math. \textbf{91} (1954), 1--42. \MR{0062292
  (15,956h)}

\bibitem{JenkinsMorse:UMichPress:1955}
\bysame, \emph{Conjugate nets on an open {R}iemann surface}, Lectures on
  functions of a complex variable, The University of Michigan Press, Ann Arbor,
  1955, pp.~123--185. \MR{0069900 (16,1097b)}

\bibitem{Kamke:MA:1928}
E.~Kamke, \emph{Zur {T}heorie der {D}ifferentialgleichungen}, Math. Ann.
  \textbf{99} (1928), no.~1, 602--615. \MR{1512468}

\bibitem{Kaplan:DJM:1940}
Wilfred Kaplan, \emph{Regular curve-families filling the plane, {I}}, Duke
  Math. J. \textbf{7} (1940), 154--185. \MR{0004116 (2,322c)}

\bibitem{Kaplan:DJM:1941}
\bysame, \emph{Regular curve-families filling the plane, {II}}, Duke Math J.
  \textbf{8} (1941), 11--46. \MR{0004117 (2,322d)}

\bibitem{Kaplan:TrAMS:1948}
\bysame, \emph{Topology of level curves of harmonic functions}, Trans. Amer.
  Math. Soc. \textbf{63} (1948), 514--522. \MR{0025159 (9,606f)}

\bibitem{Morse:JMPA:1956}
M.~Morse, \emph{La construction topologique d'un r\'eseau isotherme sur une
  surface ouverte}, J. Math. Pures Appl. (9) \textbf{35} (1956), 67--75.
  \MR{0077648 (17,1071d)}

\bibitem{Morse:FM:1952}
Marston Morse, \emph{The existence of pseudoconjugates on {R}iemann surfaces},
  Fund. Math. \textbf{39} (1952), 269--287 (1953). \MR{0057338 (15,210a)}

\bibitem{Oshemkov:PSIM:1995}
A.~A. Oshemkov, \emph{{M}orse functions on two-dimensional surfaces. {C}oding
  of singularities}, Trudy Mat. Inst. Steklov. \textbf{205} (1994), no.~Novye
  Rezult. v Teor. Topol. Klassif. Integr. Sistem, 131--140. \MR{MR1428674
  (97m:57045)}

\bibitem{Polulyakh:UMZ:ENG:2015}
Eugene Polulyakh, \emph{Kronrod-reeb graphs of functions on non-compact
  surfaces}, Ukrainian Math. Journal \textbf{67} (2015), no.~3, 375--396
  (Russian).

\bibitem{PolulyakhYurchuk:ProcIM:2009}
Eugene Polulyakh and Iryna Yurchuk, \emph{On the pseudo-harmonic functions
  defined on a disk}, Pr. Inst. Mat. Nats. Akad. Nauk Ukr. Mat. Zastos.
  \textbf{80} (2009), 151 (Ukrainian).

\bibitem{Sharko:UMZ:2003}
V.~V. Sharko, \emph{Smooth and topological equivalence of functions on
  surfaces}, Ukra\"\i n. Mat. Zh. \textbf{55} (2003), no.~5, 687--700.
  \MR{MR2071708 (2005f:58075)}

\bibitem{Sharko:Zb:2006}
\bysame, \emph{Smooth functions on non-compact surfaces}, Pr. Inst. Mat. Nats.
  Akad. Nauk Ukr. Mat. Zastos. \textbf{3} (2006), no.~3, 443--473,
  arXiv:math/0709.2511.

\bibitem{SharkoSoroka:MFAT:2015}
V.~V. Sharko and Yu.~Yu. Soroka, \emph{Topological equivalence to a
  projection}, Methods Funct. Anal. Topology \textbf{21} (2015), no.~1, 3--5.
  \MR{3407916}

\bibitem{Stoilov:1964}
S.~Stoilov, \emph{Lectures on topological principles of the theory of analytic
  functions}, Translated from the French by E. T. Ste\v ckina. With a foreword
  by B. V . \u Sabat, Izdat. ``Nauka'', Moscow, 1964 (Russian). \MR{0188461 (32
  \#5899)}

\bibitem{Weaver:AMSCP:1942}
G.~T. Whyburn, \emph{Analytic topology}, Amer. Math. Soc. Colloquium
  Publications \textbf{28} (1942).

\end{thebibliography}
\end{document}